\newtheorem{lemma}{Lemma}
\newtheorem{theorem}{Theorem}
\newtheorem{assumption}{Assumption}
\begin{document}
%
\title{Learning of Nash Equilibria in Risk-Averse Games}
%
%
%

\author{Zifan Wang, Yi Shen, Michael M. Zavlanos, and Karl H. Johansson
\thanks{* This work was supported in part by Swedish Research Council Distinguished Professor Grant 2017-01078, Knut and Alice Wallenberg Foundation, Wallenberg Scholar Grant, the Swedish Strategic Research Foundation CLAS Grant RIT17-0046, AFOSR under award \#FA9550-19-1-0169, and  NSF under award CNS-1932011. 
}
\thanks{Zifan Wang and Karl H. Johansson are with Division of Decision and Control Systems, School of Electrical Enginnering and Computer Science, KTH Royal Institute of Technology, and also with Digital Futures, SE-10044 Stockholm, Sweden. Email: \{zifanw,kallej\}@kth.se.}
\thanks{Yi Shen and Michael M. Zavlanos are with the Department of Mechanical Engineering and Materials Science, Duke University, Durham, NC, USA. Email: \{yi.shen478, michael.zavlanos\}@duke.edu}%
}

\maketitle

\begin{abstract}
This paper considers risk-averse learning in convex games involving multiple agents that aim to minimize their individual risk of incurring significantly high costs.
Specifically, the agents adopt the conditional value at risk (CVaR) as a risk measure with possibly different risk levels.
To solve this problem, we propose a first-order risk-averse leaning algorithm, in which the CVaR gradient estimate depends on an estimate of the Value at Risk (VaR) value combined with the gradient of the stochastic cost function. 
Although estimation of the CVaR gradients using finitely many samples is generally biased, we show that the accumulated error of the CVaR gradient estimates is bounded with high probability.
Moreover, assuming that the risk-averse game is strongly monotone, we show that the proposed algorithm converges to the risk-averse Nash equilibrium.
We present numerical experiments on a Cournot game example to illustrate the performance of the proposed method.
\end{abstract}


%
\IEEEpeerreviewmaketitle

\section{Introduction}

Convex games find applications in many domains ranging from online marketing \cite{lin2021doubly} to transportation networks \cite{sessa2019no}.
In these convex games, agents simultaneously take actions to minimize their loss functions, which are influenced by the actions taken by other agents.
The concept of a Nash equilibrium is central in the analysis of such games and represents a stationary point from which no agent has an incentive to deviate, see, e.g., \cite{tatarenko2018learning,mertikopoulos2019learning,bravo2018bandit,drusvyatskiy2022improved,lin2020finite,narang2022learning}.

An important consideration, especially in high-stakes applications, is sensitivity of the learned decisions to the possible risks due to the presence of uncertainty \cite{ahmadi2021risk,markowitz1991foundations,cardoso2019risk,kalogerias2022fast}.
For example, in portfolio management \cite{markowitz1991foundations}, constructing a risk-averse portfolio rather than one that yields the highest expected return in a market with much uncertainty is preferred, since it reduces the risk of suffering from large losses.
Additionally, in clinical trials \cite{cardoso2019risk}, a drug that has high average performance and high probability of negative effects may not be desirable due to the safety critical nature of this application.
The key idea in risk-averse learning is to replace the expectation in the objective function by a more general objective that employs measures of risk and considers the whole distribution of the stochastic cost. 
Popular risk measures include mean-deviation functionals \cite{shapiro2021lectures}, Value at risk (VaR) \cite{szorenyi2015qualitative,zhang2021quantile,kolla2019concentration} and Conditional Value at Risk (CVaR) \cite{rockafellar2000optimization,tamkin2019distributionally,you2021gaussian}.

In this paper, we consider learning in risk-averse games with continuous action sets in which agents aim to minimize their risk-averse cost functions.
Specifically, we employ CVaR as the risk measure and provide a sufficient condition that guarantees the strong monotonicity for the risk-averse game. We assume that the agents are capable of observing other agents’ actions and computing the gradient of their own stochastic cost function. Although computing the CVaR gradient is usually computationally difficult, we show that the CVaR gradient can be expressed as a function of the VaR value and the gradient of the stochastic cost function. Building on this insight, we develop VaR estimates using all historical samples and then use these VaR estimates to construct the CVaR gradient estimates. 
By constructing an upper confidence bound for the VaR estimate errors, we show that the proposed algorithm converges to the risk-averse Nash equilibrium with high probability under the established strong monotonicity condition.
Finally, we present numerical experiments on a Cournot game to verify our results.

To the best of our knowledge, the analysis of risk-averse games is underexplored in the literature,  with a few exceptions
\cite{yekkehkhany2020risk,slumbers2022learning,wang2022risk,wang2022zeroth}.
Specifically, \cite{yekkehkhany2020risk} considers agents that aim to maximize the probability of receiving maximum reward instead of the expected payoff, and shows that risk-averse Nash equilibria always exist. 
However, specific algorithms that converge to these risk-averse equilibria are not proposed.
The authors in \cite{slumbers2022learning} define a new concept of risk-averse equilibria in finite games using the mean-variance as the risk measure and prove that such equilibria exist. To find these equilibria, a fictitious play algorithm is proposed, but no theoretical convergence analysis is provided.
The works in \cite{wang2022risk,wang2022zeroth} investigate risk-averse games with continuous action sets and propose several risk-averse learning algorithms, which all rely on one-point zeroth-order optimization and achieve no-regret learning with high probability.
These works focus on the regret analysis but not on the Nash equilibrium convergence considered here.


The rest of the paper is organized as follows. Section~\ref{sec:1_prelim} introduces preliminaries about convex games and the VaR and CVaR risk measures. In Section~\ref{sec:2_problem}, we formally define the risk-averse games and establish the strong monotonicity for such games. In Section~\ref{sec:FO}, we propose a first-order risk-averse learning algorithm and analyze its convergence rate. The performance of the proposed method is illustrated in Section~\ref{sec:Simulation} via an online market problem.
Finally, we conclude the paper in Section~\ref{sec:Conclusion}.

\section{Preliminaries}
\label{sec:1_prelim}
\subsection{Convex Games} 
Consider a repeated game $\mathcal{G}$  with $N$ agents, whose goal is to learn their best individual actions that minimize their local loss functions. At each episode, each agent selects an action $x_i$ from the convex set $\mathcal{X}_i \subseteq \mathbb{R}^{d}$ and receives a cost value of $J_i(x_i,x_{-i}): \mathcal{X} \rightarrow \mathbb{R}$, where $x_{-i}$ represents all agents' actions except for agent $i$, and $\mathcal{X}=\Pi_{i=1}^N\mathcal{X}_i$ is the joint action space. 
For ease of notation, we usually collect all agents' actions in a vector $x:=(x_1,\ldots,x_N)$. 
The game is formally defined as
\begin{align}\label{def:game}
    \mathop{{\rm{min}}}_{x_i \in \mathcal{X}_i} J_i(x_i,x_{-i}).
\end{align}
We call the game \eqref{def:game} a convex game if $J_i(x_i,x_{-i})$ is convex in $x_i$ for all $x_{-i} \in \mathcal{X}_{-i}$, where $\mathcal{X}_{-i}=\Pi_{j \neq i}\mathcal{X}_j$.
As shown in \cite{rosen1965existence}, a convex game always has at least one Nash equilibrium. We denote by $x^{*}$ a Nash equilibrium of the game \eqref{def:game} and this point satisfies 
$J_i(x^{*})\leq J_i(x_i,x_{-i}^{*})$, 
for all $x_i \in \mathcal{X}_i$, $i=1,\ldots,N$.
At this Nash equilibrium point, agents are strategically stable in the sense that each agent lacks incentives to change its action.
Since the loss functions of the agents are convex, the Nash equilibrium can also be characterized by the first-order optimality condition, that is, $\langle \nabla_{x_i} J_i(x^{*}), x_i - x_i^{*} \rangle \geq 0, \; \forall x_i \in \mathcal{X}_i$, where $\nabla_{x_i} J_i(x)$ is the partial derivative of the loss function of agent $i$ with respect to his own action.  We write $\nabla_{i} J_i(x)$ instead of $\nabla_{x_i} J_i(x)$ whenever it is clear from the context. 
In general, it is not easy to show convergence to a fixed Nash equilibrium for games with multiple Nash Equilibria. 
For this reason, recent studies focus on games that are so-called strongly monotone and are well-known to have a unique Nash equilibrium \cite{rosen1965existence}. 
The game~\eqref{def:game} is said to be $m$-strongly monotone if for all $x,x'\in \mathcal{X}$, we have
\begin{align*}
    \sum_{i=1}^N \langle \nabla_i J_i(x) -\nabla_i J_i(x'),x_i-x_i' \rangle \geq m \left\|x -x' \right\|^2.
\end{align*}
Throughout this paper, we use $\left\|\cdot\right\|$ to denote the 2-norm for a vector.

\subsection{CVaR and VaR}
Conditional Value at Risk (CVaR) is a coherent risk measure that satisfies properties of monotonicity, sub-additivity, homogeneity, and translational invariance; see \cite{huang2021off,shapiro2021lectures}.
Formally, for a random variable $Z$ with the cumulative distribution function (CDF) denoted by $F$ and a risk level $\alpha \in (0,1]$, the ${\rm{CVaR}}$ value is defined as ${\rm{CVaR}}_{\alpha}[Z] = \mathbb{E}_F[Z| Z>  {\rm{VaR}}_{\alpha}[Z] ]$, where ${\rm{VaR}}_{\alpha}[Z] = {\rm{inf}}\{ y: F_Z(y) \geq 1-\alpha\}$ is the $1-\alpha$ quantile of the distribution of the random variable $Z$, also known as the Value at Risk (VaR). Intuitively, CVaR represents the average of the worst $\alpha \times 100\%$ cost. If the risk level is selected as $\alpha=1$, it becomes equivalent to the expected (risk-neutral) case. Thus, CVaR has a naturally distributional robust optimization formulation. 
The work \cite{rockafellar2000optimization} introduces a new approach to compute the CVaR value, i.e.,
\begin{align}\label{eq:CVaR:L:func}
    {\rm{CVaR}}_{\alpha}[Z] =  \mathop{{\rm{min}}}_{\nu \in \mathbb{R}} \left\{ \nu + \frac{1}{\alpha} \mathbb{E}_F[Z-\nu ]_{+} \right\},
\end{align}
where $[x]_{+} = {\rm{max}}\{x,0\}$. Besides, \cite{rockafellar2000optimization} shows that the right hand side of \eqref{eq:CVaR:L:func} takes the minimal value when $\nu = {\rm{VaR}}_{\alpha}[Z]$.

\section{Risk-Averse Convex Games}\label{sec:2_problem}
In this section, we formally define the risk-averse game under consideration, using the CVaR risk measure.
Moreover, we establish a condition under which this game is strongly monotone, which lays the theoretical foundation for the subsequent analysis of the Nash equilibrium convergence.

\subsection{Problem Formulation}

We consider convex games with stochastic costs $J_i(x_i,x_{-i},\xi_i): \mathcal{X} \times \Xi_i \rightarrow \mathbb{R}$, where $x_{-i}$ are the actions of all agents except for agent $i$ and $\xi_i \in \Xi_i \subset \mathbb{R}^{n_{\xi}}$ characterizes the uncertainty associated with the cost function, following some static and unknown distribution. We sometimes instead write $J_i(x,\xi_i)$ for ease of notation, where $x=(x_i,x_{-i})$ is the concatenated vector of all agents' actions.
We assume that the diameter of the convex set $\mathcal{X}_i$ is bounded by $D$ for all $i=1,\ldots,N$.

The goal of the risk-averse agents is to minimize the CVaR value of the stochastic cost.
We denote by $F_{i,x}(y)=\mathbb{P}\{J_i(x,\xi_i)\leq y \}$ the CDF of the random cost $J_i(x,\xi_i)$ and $J^{\alpha_i}$ the $1-\alpha_i$ quantile of this distribution. 
Then, for a given risk level $\alpha_i\in (0,1]$, the CVaR of the cost function $J_i(x,\xi_i)$ is defined as 
\begin{align}\label{eq:CVaR:def} 
    C_i(x):& ={\rm{CVaR}}_{\alpha_i}[J_i(x,\xi_i)] \nonumber \\
    :& = \mathbb{E}_{F_{i,x}}[J_i(x,\xi_i)|J_i(x,\xi_i)\geq J^{\alpha_i}].
\end{align} 
Notice that the CVaR value is determined by the distribution function $F_{i,x}(y)$ for a given $\alpha_i$, so we sometimes write CVaR as a function of the distribution function, i.e.,  ${\rm{CVaR}}_{\alpha_i}[F_{i,x}]:={\rm{CVaR}}_{\alpha_i}[J_i(x,\xi_i)] $.
%
%
Given different risk levels $\alpha_i$, $i=1,\ldots,N$, the goal of each agent is to minimize the individual risk-averse loss function, i.e.,
\begin{align}\label{def:risk-averse_game}
    \mathop{{\rm{min}}}_{x_i \in \mathcal{X}_i} C_i(x_i,x_{-i}).
\end{align}
In this paper, we assume that the function $J_i(x_i,x_{-i},\xi_i)$ is convex in $x_i$ for every $x_{-i} \in \mathcal{X}_{-i}$ and $\xi_i \in \Xi_i$, for all $i=1,\ldots,N$. 
By virtue of \eqref{eq:CVaR:L:func}, we have $C_i(x) = \nu_i^{*} + \frac{1}{\alpha} \mathbb{E}[J_i(x,\xi_i)-\nu_i^{*} ]_{+}$, where $\nu_i^{*} = {\rm{VaR}}_{\alpha_i}[J_i(x,\xi_i)]$. Since both the operations of point-wise supremum over convex functions and expectation preserve convexity, we can obtain that $C_i(x)$ is convex in $x_i$ for all $i=1,\ldots,N$.
As a result, the risk-averse game \eqref{def:risk-averse_game} is a convex game and has at least one equilibrium point by Theorem 1 in \cite{rosen1965existence}.

We consider the game setting in which the agents compete in an open environment that discloses all agents' past actions publicly. Specifically, each agent selects an action at each episode and at the next episode the previous actions will be shared to all agents. Besides, we assume that the agents can sample the static uncertainty in the game and know the form of its own cost function $J_i$. Thus, each agent can obtain the gradient information of $J_i$. 
The goal of this paper is to analyze the CVaR gradient and then design a risk-averse learning algorithm for the risk-averse game \eqref{def:risk-averse_game}  that achieves Nash equilibrium convergence.

\subsection{Strong Monotonicity Analysis}
In this section, we provide a sufficient condition that establishes strong monotonicity for the risk-averse game \eqref{def:risk-averse_game}.
We note that if the risk-neutral game is monotone, where agents aim to minimize the expected cost values, the risk-averse game may not necessarily be monotone. We provide an example in the following to illustrate this point. In this example, we show that when a risk-neutral game is strongly monotone and has a unique Nash equilibrium, the risk-averse game may have infinitely many Nash equilibria. 

\noindent \textbf{Example.} Consider a game with two agents in which each agent has the cost function $J_i(x,\xi_i) = c+ax_i^2+ax_i x_{-i} - abx_i + \frac{4a}{3d}x_ix_{-i}\xi_i$, $i=1,2$, where $a>0$ and $\xi_i \sim U(0,d)$.
In the risk-neutral game, each agent aims to minimize the expected cost values $\pi_i(x) =\mathbb{E}_{\xi_i} [J_i(x,\xi_i)] $, while in the risk-averse game, each agent aims to minimize $C_i(x) = {\rm{CVaR}}_{0.5}[J_i(x,\xi_i)]$.
It is easily verified that the risk-neutral game satisfies the monotone condition
\begin{align*}
    \sum_{i} \langle \nabla_i \pi_i(x) - \nabla_i \pi_i(x'),x_i- x_i' \rangle \geq m_0 \left\|x-x'\right\|^2,
\end{align*}
with $m_0 = a$.
However, in the risk-averse case, we have $C_i(x) = c+ax_i^2+2ax_i x_{-i} -abx_i$ and thus $\nabla_i C_i(x) = 2ax_i +2a x_{-i} -ab$. 
Since Nash equilibria are the points that satisfy $ \nabla_1 C_1(x) = \nabla_2 C_2(x)=0$, which correspond to all the points located on the line $x_1+x_{2}=\frac{b}{2}$, it follows that the risk-averse game has an infinite number of Nash equilibria.

Note that we require  that the risk-averse game is strongly monotone regardless of the choices of $\alpha_i \in (0,1]$, thus, every quantile of the distribution of the stochastic cost is expected to satisfy the strong monotonicity condition.
To establish the strong monotonicity condition of the risk-averse game \eqref{def:risk-averse_game} for all the choices of $\alpha_i$, we make the following assumption on the stochastic cost. 
\begin{assumption}\label{assump:strong_monotone}
For each agent $i$, the cost function can be decomposed as  $J_i(x,\xi_i)=f_i(x_i,\xi_i)+g_i(x)$.
Moreover, $f_i(x_i,\xi_i)$ is absolutely continuous and its VaR value is differentiable for any $x_i \in \mathcal{X}_i$.
Besides, $f_i(x_i,\xi_i)$ is convex in $x_i$, for every $\xi_i \in \Xi_i$ and  $g_i(x)$ satisfies 
\begin{align*}
    \sum_i \langle \nabla_i g_i(x) - \nabla_i g_i(x'),x_i- x_i' \rangle \geq m \left\| x-x'\right\|^2,
\end{align*}
for all $x,x' \in \mathcal{X}$.
\end{assumption}
Assumption \ref{assump:strong_monotone} states that the uncertainty in the stochastic cost of each agent does not rely on other agents' actions. This assumption holds for many classes of games including Cournot games. For example, in a market consisting of multiple competitive companies that produce goods at different levels, the price of these goods consists of the deterministic term $P(x)$ that depends on the total amount of production and the random term $\xi_i$ that captures the randomness in the market induced by, e.g., government policies or the climate changes. This randomness usually does not depend on  the production levels. As a result, the reward of the $i$-th company can be given by $r_i(x)=-J_i(x)=(P(x)+\xi_i)x_i$.

Given Assumption \ref{assump:strong_monotone}, we show that the risk-averse game is strongly monotone in the following lemma.
\begin{lemma}\label{lemma:strongly_monotone}
Let Assumption \ref{assump:strong_monotone} hold. Then, the risk-averse game \eqref{def:risk-averse_game} is $m$-strongly monotone.
\end{lemma}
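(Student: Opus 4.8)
The plan is to exploit the additive decomposition in Assumption~\ref{assump:strong_monotone} together with the translation invariance of CVaR, which reduces the strong monotonicity of the risk-averse pseudo-gradient to two facts: (i) the monotonicity of the gradient of a convex function, and (ii) the strong monotonicity already assumed for the $g_i$ terms. Note that the resulting constant $m$ does not depend on $\alpha_i$, so the conclusion holds for every choice of risk levels.

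First I would observe that, for a fixed joint action $x$, the quantity $g_i(x)$ is a deterministic scalar, so by translation invariance of the coherent risk measure CVaR we have $C_i(x) = {\rm{CVaR}}_{\alpha_i}[f_i(x_i,\xi_i) + g_i(x)] = {\rm{CVaR}}_{\alpha_i}[f_i(x_i,\xi_i)] + g_i(x)$. Define $h_i(x_i) := {\rm{CVaR}}_{\alpha_i}[f_i(x_i,\xi_i)]$, which depends on $x_i$ only. As argued for $C_i$ right after \eqref{def:risk-averse_game}, the representation \eqref{eq:CVaR:L:func} and the convexity of $f_i(\cdot,\xi_i)$ imply that $h_i$ is convex in $x_i$; moreover, absolute continuity of $f_i(x_i,\xi_i)$ and differentiability of its VaR value (Assumption~\ref{assump:strong_monotone}) guarantee that $h_i$ is differentiable, with $\nabla h_i(x_i) = \mathbb{E}\big[\nabla_i f_i(x_i,\xi_i) \mid f_i(x_i,\xi_i) \geq {\rm{VaR}}_{\alpha_i}[f_i(x_i,\xi_i)]\big]$. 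Consequently $\nabla_i C_i(x) = \nabla h_i(x_i) + \nabla_i g_i(x)$.

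Next I would substitute this decomposition into the definition of strong monotonicity and split the sum:
\[
\sum_{i=1}^N \langle \nabla_i C_i(x) - \nabla_i C_i(x'), x_i - x_i'\rangle = \sum_{i=1}^N \langle \nabla h_i(x_i) - \nabla h_i(x_i'), x_i - x_i'\rangle + \sum_{i=1}^N \langle \nabla_i g_i(x) - \nabla_i g_i(x'), x_i - x_i'\rangle .
\]
Each term in the first sum is nonnegative because $h_i$ is convex and hence its gradient is monotone in $x_i$, while the second sum is bounded below by $m\|x-x'\|^2$ by Assumption~\ref{assump:strong_monotone}. Combining the two bounds yields the claimed $m$-strong monotonicity.

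The main obstacle I anticipate is the rigorous justification of the differentiability of $h_i$ and the validity of pushing the gradient through the CVaR/expectation operator; this is precisely where the absolute continuity of $f_i(x_i,\xi_i)$ and the differentiability of its VaR value are used, so that the $\alpha_i$-tail event has a well-behaved boundary and a Leibniz/dominated-convergence argument (or the CVaR-gradient formula used later in Section~\ref{sec:FO}) applies. The translation-invariance step is elementary but should be stated carefully, since it is the structural reason why ruling out cross-agent coupling in the uncertainty (Assumption~\ref{assump:strong_monotone}) suffices to preserve strong monotonicity.
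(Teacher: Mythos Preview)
Your proposal is correct and follows essentially the same route as the paper: use translation invariance of CVaR to split $C_i(x)=h_i(x_i)+g_i(x)$ with $h_i(x_i)={\rm CVaR}_{\alpha_i}[f_i(x_i,\xi_i)]$, invoke convexity of $h_i$ to make the first sum nonnegative, and apply the assumed strong monotonicity of the $g_i$ part. The paper's proof is slightly terser (it cites an external lemma for the convexity of $h_i$ and does not dwell on differentiability), but the structure and all key ideas coincide.
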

\begin{proof}
Given Assumption \ref{assump:strong_monotone} and the translation invariance property of CVaR, we have that $C_i(x) = {\rm{CVaR}}_{\alpha_i}[J_i(x,\xi_i)] = {\rm{CVaR}}_{\alpha_i} [ f_i(x_i, \xi_i)] +g_i(x)$. Define ${\rm{CVaR}}_{\alpha_i} [ f_i(x_i, \xi_i)] = h_i(x_i)$. Since $f_i$ is convex in $x_i$, by Lemma 3 in \cite{cardoso2019risk}, $h_i(x_i)$ is convex in $x_i$. Then, it holds that
\begin{align}
    & \sum_i \langle \nabla_i C_i(x) - \nabla_i C_i(x'),x_i- x_i' \rangle   \nonumber \\
    = & \sum_{i} \langle \nabla_i h_i(x_i) - \nabla_i h_i(x_i'),x_i- x_i' \rangle \nonumber \\
    &+ \sum_i \langle \nabla_i g_i(x) -\nabla_i g_i(x') ,x_i- x_i' \rangle \nonumber \\
    \geq &  \sum_i \langle \nabla_i g_i(x) -\nabla_i g_i(x') ,x_i- x_i' \rangle  
    \geq  m \left\|x-x'\right\|^2, \nonumber 
\end{align}
for all $ x,x' \in \mathcal{X}$ and $\alpha_i \in(0,1]$, $i=1,\ldots,N$. The first inequality follows from the convexity of $h_i$.
The proof is complete.
\end{proof}
Given that the risk-averse game is strongly monotone, the risk-avesrse Nash equilibrium $x^{*}$ is unique \cite{rosen1965existence}.
In what follows, we require the following  assumption on the cost function $J_i$.
\begin{assumption}\label{assump:grad:bound}
$\left\|\nabla_i J_i(x,\xi_i)\right\| \leq B$, for all $i=1,\ldots,N$.
\end{assumption}
This assumption is common in the literature and holds in many applications, e.g., Cournot Games and Kelly auctions; see \cite{bravo2018bandit,lin2021doubly}. 

\section{A Risk-Averse Learning Algorithm}\label{sec:FO}
In this section, we propose a method that enables each agent to use the gradient of the stochastic cost to minimize the risk-averse cost function.

\subsection{Algorithm Description}
Before presenting the designed algorithm, we analyze the expression of the CVaR gradient.
To do so, we define the auxiliary function
\begin{align*}
    L_i(x,\nu_i) := \nu_i + \frac{1}{\alpha_i} \mathbb{E}_{\xi_i}[J_i(x,\xi_i) - \nu_i]_{+},
\end{align*}
where $\nu_i \in \mathbb{R}$ is an auxiliary variable. 
We assume that $J_i(\cdot,\xi_i)$ is Lipschitz and diffentiable for every $\xi_i$. Then, it is shown in \cite{kalogerias2022fast} that the (sub)gradient of $L_i$ can be represented as 
\begin{align*}
    \nabla_iL_i(x,\nu_i)= \mathbb{E}_{\xi_i}  \left[ \begin{array}{c} \frac{1}{\alpha_i} \textbf{1}\left\{ J_i(x,\xi_i)\geq \nu_i) \right\} \nabla_i J_i(x,\xi_i) \\ 1 - \frac{1}{\alpha_i} \textbf{1}\left\{ J_i(x,\xi_i)\geq \nu_i) \right\} \end{array} \right],
\end{align*}
where $\nabla_i J_i(x,\xi_i)$ represents the derivative of the function $J_i(x,\xi_i)$ with respect to $x_i$, $\nabla_iL_i(x,\nu_i)$ denotes the derivative of the function $L_i$ with respect to $(x_i,\nu_i)$, and $\textbf{1}\{ \cdot\}$ is the indicator function.
As shown in \cite{rockafellar2000optimization}, the VaR value of the random cost $J_i(x,\xi_i)$, which we denote by $\nu_i^{*}(x)$, satisfies $C_i(x) = {\rm{CVaR}}_{\alpha_i}[J_i(x,\xi_i)] = L_i(x,\nu_i^{*}(x) )$ and $\nu_i^{*}(x) = {\rm{left \; endpoint \; of}} \; \mathcal{A}_i^{*}(x) $, where the set $\mathcal{A}_i^{*}(x) : =  {\rm{argmin}}_{\nu_i} L_i(x,\nu_i)$.

In the following lemma, we connect the CVaR gradient with the gradient of the function $L_i(x,\nu_i)$.
\begin{lemma}\label{lemma:L:CVaR:gradient}
    It holds that 
\begin{align*}
    \nabla_i C_i(x) =&\nabla_{x_i} L_i(x,\nu_i) \Big| _{\nu_i= \nu_i^{*}(x)} \\
    =&\mathbb{E}_{\xi_i}\left[\frac{1}{\alpha_i} \textbf{1}\left\{ J_i(x,\xi_i)\geq \nu_i^{*}(x)) \right\} \nabla_i J_i(x,\xi_i) \right].
\end{align*}
\end{lemma}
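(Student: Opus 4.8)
The plan is to treat $C_i(\cdot,x_{-i})$ as the value function of the inner minimization in \eqref{eq:CVaR:L:func} and differentiate through it with an envelope-type argument. By \cite{rockafellar2000optimization} we already know $C_i(x)=L_i(x,\nu_i^{*}(x))$ with $\nu_i^{*}(x)$ the left endpoint of $\mathcal{A}_i^{*}(x)=\operatorname{argmin}_{\nu_i}L_i(x,\nu_i)$. First I would record that $L_i$ is jointly convex in $(x_i,\nu_i)$: the map $(x_i,\nu_i)\mapsto[J_i(x,\xi_i)-\nu_i]_{+}$ is the nondecreasing convex function $[\cdot]_{+}$ composed with the jointly convex map $(x_i,\nu_i)\mapsto J_i(x,\xi_i)-\nu_i$ (convex in $x_i$ by assumption, affine in $\nu_i$), hence convex, and taking expectation and adding $\nu_i$ preserves convexity. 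So, for fixed $x_{-i}$, $C_i(\cdot,x_{-i})$ is a partial minimization of a jointly convex function, consistent with the convexity statement made after \eqref{def:risk-averse_game}.

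Next I would invoke the standard subdifferential formula for partial minimization of a jointly convex function: a vector $p$ lies in $\partial_{x_i}C_i(x)$ if and only if $(p,0)\in\partial_{(x_i,\nu_i)}L_i(x,\nu_i^{*}(x))$, where the inclusion uses that the infimum over $\nu_i$ is attained at $\nu_i^{*}(x)$. Since $J_i(\cdot,\xi_i)$ is Lipschitz and differentiable, dominated convergence permits differentiation under the expectation, so $L_i(\cdot,\nu_i)$ is differentiable in $x_i$ with the gradient displayed before the lemma; thus the $x_i$-block of $\partial_{(x_i,\nu_i)}L_i$ is the singleton $\{\nabla_{x_i}L_i(x,\nu_i^{*}(x))\}$. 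Moreover, $\nu_i^{*}(x)$ being a minimizer of $L_i(x,\cdot)$ gives $0\in\partial_{\nu_i}L_i(x,\nu_i^{*}(x))$, so the pair $\big(\nabla_{x_i}L_i(x,\nu_i^{*}(x)),0\big)$ meets the inclusion. Hence $\partial_{x_i}C_i(x)=\{\nabla_{x_i}L_i(x,\nu_i^{*}(x))\}$ is a singleton, $C_i$ is differentiable in $x_i$, and $\nabla_iC_i(x)=\nabla_{x_i}L_i(x,\nu_i)\big|_{\nu_i=\nu_i^{*}(x)}$; substituting the first block of the explicit formula for $\nabla_iL_i$ yields the second equality. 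An equivalent shorter route is the classical chain rule / Danskin argument, $\nabla_iC_i(x)=\nabla_{x_i}L_i(x,\nu_i^{*}(x))+\partial_{\nu_i}L_i(x,\nu_i^{*}(x))\,\nabla_{x_i}\nu_i^{*}(x)$, killing the last term by the first-order condition $\partial_{\nu_i}L_i(x,\nu_i^{*}(x))=0$; this version additionally needs $\nu_i^{*}(x)$ differentiable and $L_i(x,\cdot)$ of class $C^{1}$.

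I expect the main obstacle to be precisely the regularity at the VaR level. One must rule out a probability atom of $J_i(x,\xi_i)$ at $\nu_i^{*}(x)$, since with an atom $L_i(x,\cdot)$ has a kink there and $\partial_{\nu_i}L_i$ is a nondegenerate interval (so the naive chain rule breaks), and one must check that $\nabla_{x_i}L_i(x,\nu_i)$ does not depend on which minimizer $\nu_i\in\mathcal{A}_i^{*}(x)$ is selected; the latter holds because on the flat part of $F_{i,x}$ the indicator $\mathbf{1}\{J_i(x,\xi_i)\ge\nu_i\}$ is almost surely constant in $\nu_i$. These smoothness and no-atom properties are exactly what the absolute-continuity and VaR-differentiability hypotheses (as in Assumption~\ref{assump:strong_monotone}) are there to supply, so invoking them closes the argument cleanly.
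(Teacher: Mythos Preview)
Your proposal is correct. The paper's own proof is precisely your ``equivalent shorter route'': it applies the chain rule
\[
\nabla_i C_i(x)=\nabla_{x_i}L_i(x,\nu_i)\big|_{\nu_i=\nu_i^{*}(x)}+\nabla_{\nu_i}L_i(x,\nu_i)\big|_{\nu_i=\nu_i^{*}(x)}\nabla_{x_i}\nu_i^{*}(x),
\]
and kills the second term by arguing that the unconstrained first-order optimality condition forces $\nabla_{\nu_i}L_i(x,\nu_i^{*}(x))=0$. So your shorter route and the paper coincide.

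Your primary argument, however, goes a genuinely different way: you use the subdifferential calculus for partial minimization of a jointly convex function, characterizing $\partial_{x_i}C_i(x)$ via $(p,0)\in\partial_{(x_i,\nu_i)}L_i(x,\nu_i^{*}(x))$ and then reading off the $x_i$-block as a singleton. This buys you something real. The paper's chain-rule step tacitly needs $L_i(x,\cdot)$ to be $C^1$ at $\nu_i^{*}(x)$ and $\nu_i^{*}(\cdot)$ to be differentiable, neither of which it justifies; your envelope route only needs that the infimum is attained and that $L_i$ is differentiable in $x_i$, which you do justify via dominated convergence and the Lipschitz/differentiability hypothesis on $J_i(\cdot,\xi_i)$. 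You are also more explicit than the paper about the underlying regularity hazards (atoms at the VaR level, nonuniqueness of the minimizer) and how Assumption~\ref{assump:strong_monotone} and the continuity hypotheses dispose of them. In short: same conclusion, but your main argument is cleaner on the analytic bookkeeping that the paper leaves implicit.
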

\begin{proof}
Since $L_i(x,\nu_i)$ is convex in $\nu_i$, $\nu_i^{*}(x)$ satisfies that $\nabla_{\nu_i} L_i(x,\nu_i^{*}(x)) (\nu_i' - \nu_i^{*}(x)) \geq 0$, for all $\nu_i' \in \mathbb{R}$. Due to the fact that the value of $\nu_i^{*}(x)$ is bounded, we conclude that $\nabla_{\nu_i} L_i(x,\nu_i )\Big|_{\nu_i=\nu_i^{*}(x)}=1 - \frac{1}{\alpha_i} \textbf{1}\left\{ J_i(x,\xi_i)\geq \nu_i^{*}(x) \right\}=0$.
Hence, we have 
\begin{align*}
    &\nabla_i C_i(x)  = \nabla_{x_i} L_i(x,\nu_i^{*}(x)) \nonumber \\ 
    =& \nabla_{x_i} L_i(x,\nu_i) \Big| _{\nu_i= \nu_i^{*}(x)} +\nabla_{\nu_i} L_i(x,\nu_i )\nabla_{x_i} \nu_i^{*}(x)   \Big|_{\nu_i=\nu_i^{*}(x)}  \nonumber \\
    = & \nabla_{x_i} L_i(x,\nu_i) \Big| _{\nu_i= \nu_i^{*}(x)},
\end{align*}
which completes the proof.
\end{proof}

\begin{algorithm}[t]
\caption{First-order risk-averse learning } \label{alg:algorithm:FO}
\begin{algorithmic}[1]
    \REQUIRE Initial value $x_0$, step size $\eta$, the total number of episodes $T$, risk level $\alpha_i$, $i=1,\cdots,N$.
    \FOR{$episode \; t=1,\ldots,T$}
        \STATE Each agent plays $x_{i,t} $, $i=1,\ldots, N$
        \FOR{agent $ i=1,\ldots,N$}
        \STATE Each agent samples $\xi_{i}^t$
        \STATE Each agent computes 
        cost evaluations $J_i(x_t,\xi_i^k)$ and $\nabla_i J_i(x_t,\xi_i^k)$, $k=1,\ldots,t$ 
        \STATE Build EDF by \eqref{eq:edf_FO} 
        \STATE Compute VaR estimate by \eqref{eq:FO:var:estimate:G_hat}
        \STATE Construct gradient estimate by \eqref{eq:FO:grad}
        \STATE Update action $x_{i,t+1}$ by \eqref{eq:FO:update:action}
        \ENDFOR
    \ENDFOR
\end{algorithmic}
\end{algorithm}
Lemma \ref{lemma:L:CVaR:gradient} provides an alternative way to compute the CVaR gradient by means of computing the gradient $\nabla_{x_i} L_i(x,\nu_i) $ and the VaR value $ \nu_i^{*}(x)$. 
Similar arguments can also be found in \cite{hong2009simulating}.
Leveraging this result, we propose a first-order risk-averse learning algorithm to solve the problem \eqref{def:risk-averse_game}, which is  illustrated in Algorithm~\ref{alg:algorithm:FO}. 

Specifically, at episode $t$, each agent plays the action $x_{i,t}$ and samples the random seed $\xi_i^t$. 
Then, each agent collects all the previous samples $\xi_i^k$, $k=1,\ldots,t$, and computes the cost evaluations $J_i(x_t,\xi_i^k)$ and gradients $\nabla_i J_i(x_t,\xi_i^k)$, for all $k=1,\ldots,t$.
For agent $i$, we denote the CDF of the random cost $J_i(x_t,\xi_i)$ as $G_{i,t}(y)= \mathbb{P}\{ J_i(x_t,\xi_i) \leq y\}$.
With finitely many samples, the agents cannot obtain the accurate CDF but only construct the EDF $\hat{G}_{i,t}$ by
\begin{align}\label{eq:edf_FO}
    \hat{G}_{i,t}(y) = \frac{1}{t}\sum_{k=1}^{t}\textbf{1}\{ J_i(x_t,\xi_i^k) \leq y\}.
\end{align}

This distribution estimate is constructed by using all historical samples and thus each agent has $t$  samples at episode $t$. 
Using the distribution estimate $ \hat{G}_{i,t}$, each agent constructs the VaR estimate as 
\begin{align}\label{eq:FO:var:estimate:G_hat}
    \nu_{i,t} = {\rm{VaR}}_{\alpha_i} [\hat{G}_{i,t} ].
\end{align}
Using the VaR estimate $\nu_{i,t}$, we design the CVaR gradient estimate as 
\begin{align}\label{eq:FO:grad}
    g_{i,t} = \frac{1}{t \alpha_i} \sum_{k=1}^{t}  \textbf{1}\left\{ J_i(x_t,\xi_i^k)\geq \nu_{i,t}) \right\} \nabla_i J_i(x_t,\xi_i^k).
\end{align}
Then, each agent performs the following projected gradient-descent update
\begin{align}\label{eq:FO:update:action}
    x_{i,t+1} \leftarrow \mathcal{P}_{\mathcal{X}_i} ( x_{i,t} - \eta g_{i,t}).
\end{align}

\subsection{Convergence Analysis}
In this section, we provide the convergence analysis for Algorithm~\ref{alg:algorithm:FO}.

From \eqref{eq:FO:grad}, we have $\mathbb{E}\left[ g_{i,t}\right] = \nabla_{x_i} L_i(x_t,\nu_{i,t})$, which indicates that the gradient estimate \eqref{eq:FO:grad} is an unbiased estimate of $\nabla_{x_i} L_i(x_t,\nu_{i,t})$, but a biased estimate of $\nabla_i C_i(x_t)$. 
Hence, there exists a gradient estimation bias, which we define as 
\begin{align*}
    \varepsilon_{i,t}:=\nabla_{x_i} L_i(x_t,\nu_{i,t})  - \nabla_i C_i(x_t).
\end{align*}
We denote by $\nu_{i,t}^{*}$ the true VaR value of the random cost $J_i(x_t,\xi_i)$, i.e., $\nu_{i,t}^{*} ={\rm{VaR}}_{\alpha_i}[J_i(x_t,\xi_i)] $. 
By virtue of Lemma \ref{lemma:L:CVaR:gradient}, we have $\varepsilon_{i,t} =\nabla_{x_i} L_i(x_t,\nu_{i,t}) - \nabla_{x_i} L_i(x_t,\nu_{i,t}^{*})$, which indicates that the performance of CVaR gradient estimate is closely related to the VaR estimate error $|\nu_{i,t}- \nu_{i,t}^{*}|$.

It has been shown in \cite{bahadur1966note} that the quantile estimation error using finitely many samples is inversely proportional to the probability density value at the VaR point.
Therefore, we make the following assumption on the PDF of the random cost.
\begin{assumption}\label{assump:PDF}
Let $F_{i,x}(y) = \mathbb{P}\{ J_i(x,\xi_i)\leq y\}$ and $ \mathcal{Y}_{i,x} = {\rm{Range}}(J_i(x,\xi_i))$. For every $x\in \mathcal{X}$,  the distribution function $F_{i,x}$ is continuously differentiable and $L_{0}$-Lipschitz continuous. Moreover, there exists a lower bound $\underline{p}>0$ on its probability density function, i.e.,  $F'_{i,x}(y)\geq \underline{p}$ for all $ y\in\mathcal{Y}_{i,x}$.
\end{assumption}
Assumption \ref{assump:PDF} states that the PDF of the random cost $J_i(x,\xi_i)$ is both upper bounded and lower bounded. 
Note that this assumption is satisfied for some common random variables, e.g., uniform and weighted truncated random variables. 
In fact, we can construct the confidence bound for VaR estimates only when the value of the PDF at the VaR point is lower bounded, see \cite{zhang2021quantile,kolla2019concentration} for further discussions.

In what follows, we present a lemma that bounds the VaR estimation error.
\begin{lemma}\label{lemma:var_error}
Let Assumption \ref{assump:PDF} hold.
Suppose that we have $t$ samples at episode $t$.
Then, we have 
\begin{align*}
    \mathbb{P}\{ |\nu_{i,t} - \nu_{i,t}^{*}| > \epsilon \}\leq 2e^{-2 t \epsilon^2 \underline{p}^2}.
\end{align*}
\end{lemma}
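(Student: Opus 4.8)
The plan is to reduce the two-sided VaR estimate error to a one-sided deviation of the empirical distribution function $\hat{G}_{i,t}$ from the true CDF $F_{i,x_t}=G_{i,t}$, and then invoke a concentration inequality of Dvoretzky--Kiefer--Wolfowitz (DKW) type together with the lower bound $\underline{p}$ on the density.

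\medskip

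\noindent\textbf{Step 1: Translate the event $\{\nu_{i,t}-\nu_{i,t}^*>\epsilon\}$ into an EDF event.} Recall that $\nu_{i,t}={\rm VaR}_{\alpha_i}[\hat G_{i,t}]=\inf\{y:\hat G_{i,t}(y)\ge 1-\alpha_i\}$ and $\nu_{i,t}^*={\rm VaR}_{\alpha_i}[G_{i,t}]$. By definition of the quantile of the EDF, $\nu_{i,t}>\nu_{i,t}^*+\epsilon$ implies $\hat G_{i,t}(\nu_{i,t}^*+\epsilon)<1-\alpha_i$. On the other hand, since $G_{i,t}$ is continuous with density at least $\underline{p}$ on its range (Assumption~\ref{assump:PDF}), we have $G_{i,t}(\nu_{i,t}^*+\epsilon)\ge G_{i,t}(\nu_{i,t}^*)+\underline{p}\,\epsilon = (1-\alpha_i)+\underline{p}\,\epsilon$ (using $G_{i,t}(\nu_{i,t}^*)=1-\alpha_i$ by continuity). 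Combining the two displays, on the event $\{\nu_{i,t}-\nu_{i,t}^*>\epsilon\}$ we get $G_{i,t}(\nu_{i,t}^*+\epsilon)-\hat G_{i,t}(\nu_{i,t}^*+\epsilon)>\underline{p}\,\epsilon$, hence $\sup_y |G_{i,t}(y)-\hat G_{i,t}(y)|>\underline{p}\,\epsilon$. The lower-tail event $\{\nu_{i,t}-\nu_{i,t}^*<-\epsilon\}$ is handled symmetrically: it forces $\hat G_{i,t}(\nu_{i,t}^*-\epsilon)\ge 1-\alpha_i$ while $G_{i,t}(\nu_{i,t}^*-\epsilon)\le (1-\alpha_i)-\underline{p}\,\epsilon$, giving the same supremum bound. (One must be slightly careful at the boundary of $\mathcal{Y}_{i,x_t}$, where $y=\nu_{i,t}^*\pm\epsilon$ may fall outside the range; in that case the corresponding one-sided event is empty, which only helps.)

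\medskip

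\noindent\textbf{Step 2: Apply the DKW inequality.} The samples $\xi_i^1,\dots,\xi_i^t$ are i.i.d.\ (the uncertainty is static and the action $x_t$ is fixed at the moment of evaluation), so $\hat G_{i,t}$ is the empirical CDF of $t$ i.i.d.\ draws of $J_i(x_t,\xi_i)$. The Dvoretzky--Kiefer--Wolfowitz--Massart inequality gives, for any $\delta>0$,
\begin{align*}
    \mathbb{P}\Big\{ \sup_y |G_{i,t}(y) - \hat G_{i,t}(y)| > \delta \Big\} \le 2 e^{-2 t \delta^2}.
\end{align*}
Taking $\delta=\underline{p}\,\epsilon$ and chaining with Step~1 yields
\begin{align*}
    \mathbb{P}\{ |\nu_{i,t}-\nu_{i,t}^*| > \epsilon \} \le \mathbb{P}\Big\{ \sup_y |G_{i,t}(y)-\hat G_{i,t}(y)| > \underline{p}\,\epsilon \Big\} \le 2 e^{-2 t \epsilon^2 \underline{p}^2},
\end{align*}
which is the claim.

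\medskip

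\noindent\textbf{Main obstacle.} The routine part is the DKW bound; the delicate part is Step~1, in particular making the quantile-to-EDF translation airtight. The subtleties are: (i) the EDF is a step function, so inequalities involving $\hat G_{i,t}$ at the point $\nu_{i,t}$ versus just below it need care (using right-continuity of $\hat G_{i,t}$ and the definition of the infimum); (ii) one must use continuity of $G_{i,t}$ to assert $G_{i,t}(\nu_{i,t}^*)=1-\alpha_i$ exactly rather than $\ge 1-\alpha_i$; and (iii) the density lower bound only holds on $\mathcal{Y}_{i,x_t}$, so the monotone step $G_{i,t}(\nu_{i,t}^*\pm\epsilon)\gtrless (1-\alpha_i)\pm\underline{p}\epsilon$ must be argued by integrating the density over $[\nu_{i,t}^*,\nu_{i,t}^*+\epsilon]\cap\mathcal{Y}_{i,x_t}$ and observing that outside the range the CDF is already $0$ or $1$, which makes the desired inequality trivially true or the event empty. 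Once these measure-theoretic bookkeeping points are dispatched, the rest is immediate.
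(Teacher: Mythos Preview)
Your proposal is correct and follows essentially the same route as the paper: translate the quantile deviation $|\nu_{i,t}-\nu_{i,t}^*|>\epsilon$ into a sup-norm deviation $\sup_y|\hat G_{i,t}(y)-G_{i,t}(y)|>\underline{p}\,\epsilon$ via the density lower bound, then apply the DKW inequality. The only cosmetic difference is that the paper invokes the Mean Value Theorem to get $G_{i,t}(\nu_{i,t}^*+\epsilon)-G_{i,t}(\nu_{i,t}^*)\ge \underline{p}\,\epsilon$, whereas you integrate the density directly; your treatment of the EDF step-function and boundary issues is in fact more careful than the paper's.
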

\begin{proof}
The proof is motivated by \cite{shao2003mathematical}.
Let $\epsilon>0$ be fixed. Note that, for any CDF $G(y)$, $G(t)\geq t$ if and only if $y\geq G^{-1}(t)$. Hence, we have
\begin{align*}
    &\mathbb{P}\left\{ \nu_{i,t} > \nu_{i,t}^{*} + \epsilon \right\} 
    = \mathbb{P}\left\{ \hat{G}_{i,t}(\nu_{i,t}) > \hat{G}_{i,t}(\nu_{i,t}^{*} + \epsilon) \right\} \nonumber \\
    = &\mathbb{P}\left\{ G_{i,t}(\nu_{i,t}^{*} + \epsilon) -\hat{G}_{i,t}(\nu_{i,t}^{*} + \epsilon)  >  G_{i,t}(\nu_{i,t}^{*} + \epsilon) - \alpha_i\right\} \nonumber \\
    \leq & \mathbb{P}\left\{ \mathop{\rm{sup}}_{y} |\hat{G}_{i,t}(y)-G_{i,t}(y)| >  G_{i,t}(\nu_{i,t}^{*} + \epsilon) - \alpha_i\right\} \nonumber \\
    \leq & \mathbb{P}\Big\{ \mathop{\rm{sup}}_{y} |\hat{G}_{i,t}(y)-G_{i,t}(y)| >  G_{i,t}(\nu_{i,t}^{*} + \epsilon)- G_{i,t}(\nu_{i,t}^{*})\Big\} ,
\end{align*}
where the second and last equalities follow from the facts $\hat{G}_{i,t}(\nu_{i,t}) = \alpha_i $ and $G_{i,t}(\nu_{i,t}^{*})= \alpha_i$, respectively. 
By virtue of the Mean Value Theorem, there exists $\nu_1 \in (\nu_{i,t}^{*}, \nu_{i,t}^{*}+\epsilon)$ such that $G_{i,t}(\nu_{i,t}^{*} + \epsilon) - G_{i,t}(\nu_{i,t}^{*} ) = G_{i,t}^{'}(\nu_1) \epsilon$. 
Using Assumption \ref{assump:PDF}, we have that
\begin{align}
    &\mathbb{P}\left\{ \nu_{i,t} > \nu_{i,t}^{*} + \epsilon \right\} \nonumber \\
    \leq & \mathbb{P}\left\{ \mathop{\rm{sup}}_{y} |\hat{G}_{i,t}(y)-G_{i,t}(y)| >  G_{i,t}^{'}(\nu_1) \epsilon \right\} \nonumber \\
    \leq & \mathbb{P}\left\{ \mathop{\rm{sup}}_{y} |\hat{G}_{i,t}(y)-G_{i,t}(y)| >  \underline{p}  \epsilon \right\} \nonumber.
\end{align}
Using similar arguments yields the other side of the inequality. Hence, we have 
\begin{align}\label{eq:FO:num2}
    \mathbb{P}\left\{ |\nu_{i,t} - \nu_{i,t}^{*} |> \epsilon \right\}  
    \leq & \mathbb{P}\left\{ \mathop{\rm{sup}}_{y} |\hat{G}_{i,t}(y)-G_{i,t}(y)| >  \underline{p}  \epsilon \right\}.
\end{align}
By virtue of the definition of $\hat{G}_{i,t}$ in \eqref{eq:edf_FO}, this EDF is constructed using $t$ samples. Using the Dvoretzky–Kiefer–Wolfowitz (DKW) inequality, we have 
\begin{align}\label{eq:FO:num3}
    \mathbb{P}\left\{ \mathop{\rm{sup}}_{y} |\hat{G}_{i,t}(y)-G_{i,t}(y)| >  \underline{p}  \epsilon \right\}  
    \leq & 2 e^{-2 t \underline{p}^2 \epsilon^2 }.
\end{align}
Substituting \eqref{eq:FO:num3} into \eqref{eq:FO:num2} completes the proof.
\end{proof}
%

Based on Lemma \ref{lemma:var_error}, the accumulated error of the CVaR gradient estimate is bounded, which is presented in the following lemma.
\begin{lemma}\label{lemma:FO:sum:epsilon}
Given a confidence level $\bar{\gamma}$, we have that 
\begin{align*}
    \sum_{t=1}^T \left\|\varepsilon_{i.t}\right\| \leq \frac{\sqrt{2}B L_{\psi}}            { \alpha_i\underline{p}}   \sqrt{\ln \frac{2}{\bar{\gamma}}} \sqrt{T}
\end{align*}
with probability at least $1-\gamma$, where $\gamma=\bar{\gamma}T$.
\end{lemma}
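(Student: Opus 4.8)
The plan is to relate the gradient estimation bias $\varepsilon_{i,t}$ to the VaR estimation error $|\nu_{i,t}-\nu_{i,t}^*|$ via a Lipschitz-type bound, then sum the resulting tail bound from Lemma~\ref{lemma:var_error} over $t=1,\dots,T$ using a union bound. First I would recall from the discussion preceding the lemma that $\varepsilon_{i,t} = \nabla_{x_i} L_i(x_t,\nu_{i,t}) - \nabla_{x_i} L_i(x_t,\nu_{i,t}^*)$, and write out both terms using the explicit formula for $\nabla_i L_i$, namely $\nabla_{x_i}L_i(x,\nu) = \tfrac{1}{\alpha_i}\mathbb{E}_{\xi_i}[\mathbf{1}\{J_i(x,\xi_i)\ge\nu\}\nabla_i J_i(x,\xi_i)]$. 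Subtracting gives $\varepsilon_{i,t} = \tfrac{1}{\alpha_i}\mathbb{E}_{\xi_i}\big[(\mathbf{1}\{J_i(x_t,\xi_i)\ge\nu_{i,t}\} - \mathbf{1}\{J_i(x_t,\xi_i)\ge\nu_{i,t}^*\})\nabla_i J_i(x_t,\xi_i)\big]$. Taking norms, bounding $\|\nabla_i J_i\|\le B$ by Assumption~\ref{assump:grad:bound}, and noting that the two indicators disagree only on the event that $J_i(x_t,\xi_i)$ lies between $\nu_{i,t}$ and $\nu_{i,t}^*$, we get $\|\varepsilon_{i,t}\| \le \tfrac{B}{\alpha_i}\,\mathbb{P}\{J_i(x_t,\xi_i)\text{ between }\nu_{i,t}\text{ and }\nu_{i,t}^*\}$. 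By the $L_0$-Lipschitz continuity of the CDF in Assumption~\ref{assump:PDF} (here $L_\psi$ plays the role of $L_0$), this probability is at most $L_\psi|\nu_{i,t}-\nu_{i,t}^*|$, so $\|\varepsilon_{i,t}\| \le \tfrac{B L_\psi}{\alpha_i}|\nu_{i,t}-\nu_{i,t}^*|$.

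Next I would apply Lemma~\ref{lemma:var_error}: for each fixed $t$, $\mathbb{P}\{|\nu_{i,t}-\nu_{i,t}^*|>\epsilon_t\} \le 2e^{-2t\epsilon_t^2\underline p^2}$. Setting the right-hand side equal to $\bar\gamma$ and solving gives $\epsilon_t = \tfrac{1}{\underline p}\sqrt{\tfrac{1}{2t}\ln\tfrac{2}{\bar\gamma}}$, so that with probability at least $1-\bar\gamma$ we have $|\nu_{i,t}-\nu_{i,t}^*| \le \tfrac{1}{\underline p}\sqrt{\tfrac{1}{2t}\ln\tfrac{2}{\bar\gamma}}$, hence $\|\varepsilon_{i,t}\| \le \tfrac{B L_\psi}{\alpha_i\underline p}\sqrt{\tfrac{1}{2t}\ln\tfrac{2}{\bar\gamma}}$. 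A union bound over $t=1,\dots,T$ says all these events hold simultaneously with probability at least $1-\bar\gamma T = 1-\gamma$. On that event, sum over $t$:
\begin{align*}
    \sum_{t=1}^T \|\varepsilon_{i,t}\| \le \frac{B L_\psi}{\alpha_i\underline p}\sqrt{\frac{1}{2}\ln\frac{2}{\bar\gamma}}\sum_{t=1}^T \frac{1}{\sqrt t} \le \frac{B L_\psi}{\alpha_i\underline p}\sqrt{\frac{1}{2}\ln\frac{2}{\bar\gamma}}\cdot 2\sqrt T = \frac{\sqrt 2 B L_\psi}{\alpha_i\underline p}\sqrt{\ln\frac{2}{\bar\gamma}}\sqrt T,
\end{align*}
using the standard estimate $\sum_{t=1}^T t^{-1/2} \le 2\sqrt T$. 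This matches the claimed bound exactly.

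The main obstacle I anticipate is the first step: justifying the passage from the difference of indicators to the probability of the "sandwich" event and then to $L_\psi|\nu_{i,t}-\nu_{i,t}^*|$ cleanly, since $\nu_{i,t}$ is itself a random quantity depending on the same samples that define the empirical gradient, so some care is needed to argue the bound $\|\varepsilon_{i,t}\| \le \tfrac{BL_\psi}{\alpha_i}|\nu_{i,t}-\nu_{i,t}^*|$ holds pointwise (i.e.\ conditionally on the realized $\nu_{i,t}$) before taking the high-probability bound on $|\nu_{i,t}-\nu_{i,t}^*|$ — but this works because $\varepsilon_{i,t}$ is defined via the population expectation $\mathbb{E}_{\xi_i}[\cdot]$ holding $\nu_{i,t}$ fixed. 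A secondary minor point is reconciling notation: the statement uses $L_\psi$ whereas Assumption~\ref{assump:PDF} introduced $L_0$; I would simply identify $L_\psi = L_0$ (the Lipschitz constant of the cost CDF, possibly composed with whatever map $\psi$ the paper uses elsewhere). Everything else is a routine tail-bound inversion, union bound, and the $\sum t^{-1/2}\le 2\sqrt T$ integral comparison.
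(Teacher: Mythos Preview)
Your proposal is correct and follows essentially the same route as the paper's proof: bound $\|\varepsilon_{i,t}\|$ by $\tfrac{BL_\psi}{\alpha_i}|\nu_{i,t}-\nu_{i,t}^*|$ via the CDF Lipschitz constant, invert the tail bound of Lemma~\ref{lemma:var_error} to get $|\nu_{i,t}-\nu_{i,t}^*|\le \tfrac{1}{\underline p\sqrt{2t}}\sqrt{\ln(2/\bar\gamma)}$, take a union bound over $t$, and sum using $\sum_{t=1}^T t^{-1/2}\le 2\sqrt T$. The paper merely presents these steps in a slightly different order (union bound first, Lipschitz step second) and indeed shares the $L_\psi$/$L_0$ notational slip you flagged.
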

\begin{proof}
    See Appendix \ref{Appen:L:FO:sum:epsilon}.
\end{proof}

For ease of notation, we define $S_1(\alpha):= \sum_i\frac{1}{\alpha_i}$ and $S_2(\alpha):=\sum_i \frac{1}{\alpha_i^2}$.
Now we are ready to present the convergence result. 
\begin{theorem}\label{theorem:FO}
Let Assumptions \ref{assump:strong_monotone}, \ref{assump:grad:bound} and \ref{assump:PDF}  hold, and select $\eta=\frac{D}{B}T^{-\frac{1}{2}} $. Then, Algorithm \ref{alg:algorithm:FO} achieves
\begin{align}
    & \frac{1}{T} \sum_{t=1}^T \mathbb{E}\left\| x_t - x^{*} \right\|^2  \nonumber \\
      = & \mathcal{O}\Big( T^{-\frac{1}{2}} \big( S_2(\alpha) + \sqrt{\ln (T / \gamma )} S_1(\alpha) \big)\Big),
\end{align}
with probability at least $1-\gamma$.
\end{theorem}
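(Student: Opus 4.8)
The plan is to run the standard Lyapunov (energy) argument for projected pseudo-gradient descent in a strongly monotone game, treating the CVaR-gradient estimation error as an exogenous perturbation controlled by Lemma~\ref{lemma:FO:sum:epsilon}. Concatenating the per-agent updates \eqref{eq:FO:update:action} into $x_{t+1}=\mathcal{P}_{\mathcal{X}}(x_t-\eta g_t)$ with $g_t=(g_{1,t},\dots,g_{N,t})$, and using non-expansiveness of the Euclidean projection onto the convex set $\mathcal{X}$ together with $\mathcal{P}_{\mathcal{X}}(x^*)=x^*$, I would first derive the one-step inequality
\[
\|x_{t+1}-x^*\|^2 \le \|x_t-x^*\|^2 - 2\eta\langle g_t,x_t-x^*\rangle + \eta^2\|g_t\|^2 .
\]
Next I split $g_{i,t}=\nabla_i C_i(x_t)+\varepsilon_{i,t}+\zeta_{i,t}$, where $\varepsilon_{i,t}=\nabla_{x_i}L_i(x_t,\nu_{i,t})-\nabla_i C_i(x_t)$ is the VaR-induced bias and $\zeta_{i,t}=g_{i,t}-\nabla_{x_i}L_i(x_t,\nu_{i,t})$ is mean-zero by the discussion preceding the theorem, and I bound $\|g_{i,t}\|\le B/\alpha_i$ using Assumption~\ref{assump:grad:bound} and the fact that \eqref{eq:FO:grad} is an indicator-weighted average of gradients of norm at most $B$, so that $\|g_t\|^2\le B^2 S_2(\alpha)$.

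The core step is to lower-bound $\langle g_t,x_t-x^*\rangle$. Since the risk-averse game \eqref{def:risk-averse_game} is convex, $x^*$ satisfies the first-order optimality condition $\sum_i\langle\nabla_i C_i(x^*),x_{i,t}-x_i^*\rangle\ge 0$; combining this with the $m$-strong monotonicity guaranteed by Lemma~\ref{lemma:strongly_monotone} gives $\sum_i\langle\nabla_i C_i(x_t),x_{i,t}-x_i^*\rangle\ge m\|x_t-x^*\|^2$. For the perturbation part I use the diameter bound $\|x_{i,t}-x_i^*\|\le D$ to get $|\langle\varepsilon_{i,t},x_{i,t}-x_i^*\rangle|\le D\|\varepsilon_{i,t}\|$. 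Substituting into the one-step inequality, rearranging, summing over $t=1,\dots,T$, telescoping the $\|x_t-x^*\|^2$ terms, and taking expectations (so the $\zeta_{i,t}$ terms average out) yields
\[
2\eta m\sum_{t=1}^T \mathbb{E}\|x_t-x^*\|^2 \le \|x_1-x^*\|^2 + \eta^2 B^2 S_2(\alpha)\,T + 2\eta D\sum_{i=1}^N\sum_{t=1}^T \mathbb{E}\|\varepsilon_{i,t}\| .
\]
I then invoke Lemma~\ref{lemma:FO:sum:epsilon} with $\bar\gamma=\gamma/T$ to bound $\sum_i\sum_t\|\varepsilon_{i,t}\| \le \frac{\sqrt2 B L_\psi}{\underline{p}}S_1(\alpha)\sqrt{\ln(2T/\gamma)}\sqrt T$ on an event of probability at least $1-\gamma$, divide through by $2\eta mT$, and substitute $\eta=(D/B)T^{-1/2}$: the three terms on the right become $\mathcal O(T^{-1/2})$, $\mathcal O(T^{-1/2}S_2(\alpha))$, and $\mathcal O(T^{-1/2}\sqrt{\ln(T/\gamma)}\,S_1(\alpha))$ respectively, which is the claimed bound.

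The main obstacle is making the perturbation handling clean: the threshold $\nu_{i,t}$ is itself a random function of the same $t$ samples that enter \eqref{eq:FO:grad}, so $g_{i,t}$ is not literally a fresh-sample unbiased estimate of $\nabla_{x_i}L_i(x_t,\nu_{i,t})$, and one must justify the split into the bias $\varepsilon_{i,t}$ (controlled by the VaR concentration) and a mean-zero residual $\zeta_{i,t}$. The resolution is that Lemma~\ref{lemma:var_error} already controls the only quantity coupling $\nu_{i,t}$ to the gradient, namely $|\nu_{i,t}-\nu_{i,t}^*|$, and the union bound over $t\le T$ needed for Lemma~\ref{lemma:FO:sum:epsilon} to hold simultaneously at all episodes is exactly what forces the $\bar\gamma=\gamma/T$ rescaling and hence the extra $\sqrt{\ln(T/\gamma)}$ factor. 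A secondary point is careful bookkeeping of the $1/\alpha_i$ factors, so that the $1/\alpha_i$-scaling of the bias produces $S_1(\alpha)$ while the $1/\alpha_i^2$-scaling of $\|g_t\|^2$ produces $S_2(\alpha)$.
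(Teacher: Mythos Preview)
Your proposal is correct and follows essentially the same route as the paper: the paper also expands $\|x_{i,t+1}-x_i^*\|^2$ via projection non-expansiveness, takes expectations to replace $g_{i,t}$ by $\nabla_{x_i}L_i(x_t,\nu_{i,t})=\nabla_i C_i(x_t)+\varepsilon_{i,t}$, bounds $\mathbb{E}\|g_{i,t}\|^2\le B^2/\alpha_i^2$, invokes optimality of $x^*$ together with $m$-strong monotonicity, sums over $i$ and $t$, applies Lemma~\ref{lemma:FO:sum:epsilon}, and finally substitutes $\eta=(D/B)T^{-1/2}$. The sample-dependence concern you flag about $\nu_{i,t}$ and the mean-zero property of $\zeta_{i,t}$ is legitimate, but the paper glosses over it as well, simply asserting $\mathbb{E}[g_{i,t}]=\nabla_{x_i}L_i(x_t,\nu_{i,t})$ without further justification.
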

\begin{proof}
    See Appendix \ref{Appen:T:FO}.
\end{proof}


Theorem~\ref{theorem:FO} shows that Algorithm~\ref{alg:algorithm:FO} achieves a time-averaged convergence to the risk-averse Nash equilibrium with high probability. If the risk-averse game is convex and not necessarily strongly monotone, we can use similar techniques and show that Algorithm~\ref{alg:algorithm:FO} achieves sub-linear regret. 

\section{Numerical Result}\label{sec:Simulation} 
In this section, we illustrate the proposed algorithm on a Cournot game. 
Specifically, we consider two risk-averse agents $i=1,2$. 
Each agent determines the production level $x_i$ of a homogeneous product and has an individual cost function $J_i(x)=1-(2-\sum_jx_j)x_i+ 0.2x_i+\xi_i x_i$, where $\xi_i\sim U(0,1)$ is a uniform random variable. 
Here we utilize the term $\xi_i x_i$ to represent the uncertainty occurred in the market, which is proportional to the production level $x_i$. 
It is easy to verify that the game satisfies Assumption~\ref{assump:strong_monotone} and is strongly monotone. 
The agents have their own risk levels $\alpha_i$ and they aim to minimize the CVaR value of their cost functions. We select $\alpha_1 = 0.4$ and $\alpha_2=0.8$.  
It can be verified that the Nash equilibrium point of the formulated risk-averse game is $x^{*}=(0.2667,0.4667)$.

To compute the distribution function in Algorithm~\ref{alg:algorithm:FO} in practice, we partition the interval $[0, U]$ into 1000 equal-width bins and approximate the expectation by the sum of finite terms. 
We compare Algorithm~\ref{alg:algorithm:FO} with the zeroth-order algorithm in \cite{wang2022zeroth}, which we term  the zeroth-order algorithm with momentum. 
To explore the effect of bias in the estimates of the VaR values, we run another first-order algorithm with accurate VaR values at each iteration, which we term the unbiased first-order algorithm. 
Each algorithm is run for 20 trials and the parameters of these algorithms are separately optimally tuned. 
The convergence results are presented in Figure \ref{fig_algs}.
We observe that our first-order method, i.e., Algorithm~\ref{alg:algorithm:FO}, outperforms the zeroth-order algorithm. 
Since Algorithm~\ref{alg:algorithm:FO} cannot entirely eliminate the bias in the VaR value estimates, it performs worse than the unbiased first-order algorithm. However, as the number of samples increases and the VaR estimate error diminishes, the performance of these two algorithms becomes close.

\begin{figure}[t] 
\begin{center}
\centerline{\includegraphics[width=1\columnwidth]{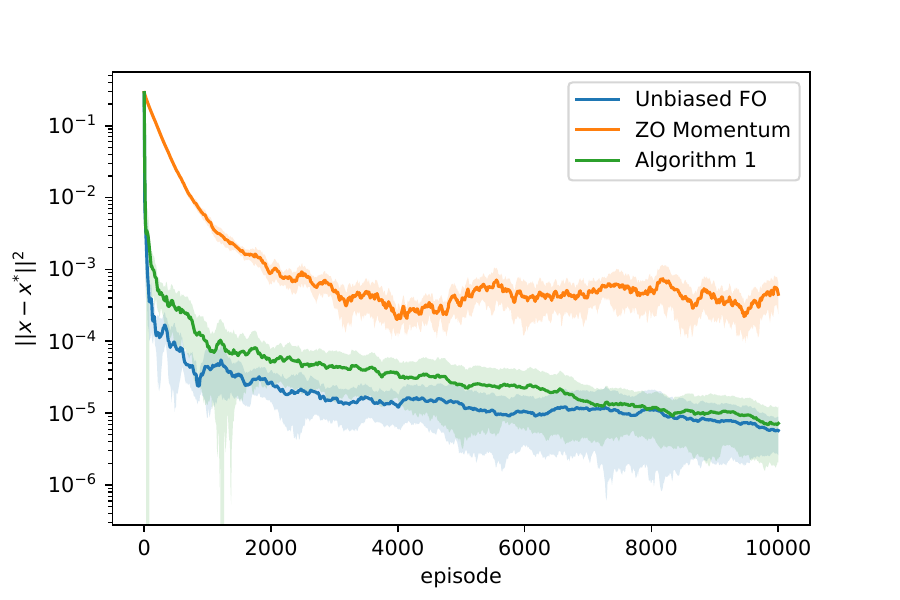}}
\caption{Error to the Nash equilibrium among Algorithm 1, unbiased first-order algorithm (Unbiased FO), and the zeroth-order algorithm with momentum (ZO momentum) in \cite{wang2022risk}. The solid lines and shades are averages and standard deviations over 20 runs.}
\label{fig_algs}
\end{center}
\vskip -0.2in
\end{figure}

\section{Conclusion}\label{sec:Conclusion} 
In this work, we proposed a first-order method to solve convex games with risk-averse agents.
We showed that the VaR estimates play a key role in estimating the CVaR gradient.
Assuming that the game is strongly monotone,  we showed that the VaR estimates can be bounded and, as a result, our proposed algorithm converges to the Nash equilibrium with high probability.
We provided numerical simulations to illustrate the performance of our algorithm.

\begin{appendix}


\subsection{Proof of Lemma \ref{lemma:FO:sum:epsilon}}\label{Appen:L:FO:sum:epsilon}
By applying Lemma~\ref{lemma:var_error} and setting $\bar{\gamma} = 2e^{-2 t \epsilon^2 \underline{p}^2}$ with $\bar{\gamma} = \gamma / T$, we have 
\begin{align}\label{eq:event:var_estimate}
    \mathbb{P}\left\{ |\nu_{i,t} - \nu_{i,t}^{*}| > \frac{1}{\underline{p}\sqrt{2 t}} \sqrt{\ln \frac{2C}{\bar{\gamma}}} \right\}\leq \bar{\gamma}.
\end{align}
Define the events in \eqref{eq:event:var_estimate} as $\mathcal{B}_t$.
Then, for all $t=1,\ldots, T$, we have
$ |\nu_{i,t} - \nu_{i,t}^{*}| \leq \frac{1}{\underline{p}\sqrt{2t}} \sqrt{\ln \frac{2}{\bar{\gamma}}}, \; \forall t=1,\ldots, T, $
with probability at least $1-\gamma$, since $1-\mathbb{P} \{ \bigcup_{t=1}^T \mathcal{B}_t\} \geq 1- \sum_{t=1}^T\mathbb{P}\{\mathcal{B}_t\}\geq 1-T \frac{\gamma}{T}\geq1-\gamma$.

Next, we analyze the property of $\varepsilon_{i,t}$. Set $\nu_m = {\rm{min}}\{ \nu_{i,t},\nu_{i,t}^{*}\}$, $\nu_M ={\rm{ max}}\{ \nu_{i,t},\nu_{i,t}^{*}\}$. We have
\begin{align*}
    &\varepsilon_{i,t} = \nabla_{x_i} L_i(x_t,\nu_{i,t}) -  \nabla_i C_i(x_t) \nonumber \\
    =& \mathbb{E} \Big[\frac{1}{\alpha_i}  \textbf{1}\left\{ J_i(x_t,\xi_i)\geq \nu_{i,t}) \right\} \nabla_i J_i(x_t,\xi_i)\Big] \nonumber \\
    &-\mathbb{E} \Big[\frac{1}{\alpha_i}  \textbf{1}\left\{ J_i(x_t,\xi_i)\geq \nu_{i,t}^{*}) \right\} \nabla_i J_i(x_t,\xi_i)\Big] \nonumber \\
    = & \mathbb{E} \Big[\frac{1}{\alpha_i}  {\rm{sgn}}\{\nu_{i,t}-\nu_{i,t}^{*}\} \textbf{1}\left\{\nu_m \leq J_i(x,\xi_i)\leq \nu_M \right\}    \nabla_i J_i(x_t,\xi_i)\Big]. \nonumber
\end{align*}
Using the fact that $\left\|\mathbb{E}[X Y]\right\| \leq \mathbb{E}[\left\|X\right\| \left\|Y\right\|]$ for any random variable $X,Y$,  for all $ t\geq 1$, we have 
\begin{align}\label{eq:FO:ineq:1}
    \left\|\varepsilon_{i,t}\right\| \leq & \frac{1}{\alpha_i} \mathbb{E}[ \textbf{1}\left\{\nu_m \leq J_i(x,\xi_i)\leq \nu_M) \right\}] B \nonumber \\
    = & \frac{B}{\alpha_i} (G_{i,t}(\nu_M) - G_{i,t}(\nu_m))  \nonumber \\
    \leq & \frac{B L_{\psi}} {\alpha_i} |\nu_{i,t} - \nu_{i,t}^{*}| \nonumber \\
    \leq &  \frac{B L_{\psi}}{ \alpha_i\underline{p} \sqrt{2t}} \sqrt{\ln \frac{2T}{\gamma}} ,
\end{align}
with probability at least $1-\gamma$.
Summing up the inequality \eqref{eq:FO:ineq:1} over $t=1,\ldots,T$, we have
\begin{align*}
    \sum_{t=1}^T \left\|\varepsilon_{i.t}\right\| 
    \leq & \sum_{t=1}^T \frac{B L_{\psi}}{ \alpha_i\underline{p}\sqrt{2t}} \sqrt{\ln \frac{2T}{\gamma}} \nonumber \\
    \leq & \frac{B L_{\psi}}{ \alpha_i\underline{p}\sqrt{2}} \sqrt{\ln \frac{2T}{\gamma}} \Big( 1+ \int_{1}^T \frac{1}{\sqrt{t}} dt \Big)\nonumber \\
    \leq & \frac{B L_{\psi}}{ \alpha_i\underline{p}\sqrt{2}} \sqrt{\ln \frac{2T}{\gamma}} \Big( 1+ 2\sqrt{t} \Big|_{1}^T dt \Big)\nonumber \\
    \leq & \frac{\sqrt{2}B L_{\psi}}{ \alpha_i\underline{p}} \sqrt{\ln \frac{2T}{\gamma}} \sqrt{T}, \nonumber 
\end{align*}
which completes the proof.

\subsection{Proof of Theorem \ref{theorem:FO}}\label{Appen:T:FO}

From the update equation \eqref{eq:FO:update:action}, we have
\begin{align}\label{eq:FO:t1}
    &\quad \left\| x_{i,t+1} - x_i^{*}\right\|^2 \nonumber \\
    &= \left\| \mathcal{P}_{\mathcal{X}_i}(x_{i,t}- \eta g_{i,t} ) - x_i^{*}\right\|^2 \nonumber \\
    &\leq \left\| x_{i,t} - x_i^{*} - \eta g_{i,t} \right\|^2 \nonumber \\
    & = \left\| x_{i,t} - x_i^{*}\right\|^2 + \eta^2 \left\| g_{i,t} \right\|^2 - 2 \eta \langle g_{i,t}, x_{i,t}-x_i^{*} \rangle,
\end{align}
where the inequality holds since the projection operator is non-expansive.
Taking expectation on both sides of the inequality \eqref{eq:FO:t1}, it follows that 
\begin{align}\label{eq:FO:t2}
    & \mathbb{E} \left[ \left\| x_{i,t+1} - x_i^{*}\right\|^2 \right] \nonumber \\
    \leq &\mathbb{E} \left[\left\| x_{i,t} - x_i^{*}\right\|^2\right] +  \eta^2  \mathbb{E}\left[ \left\| g_{i,t} \right\|^2\right] \nonumber \\
    &- 2\eta \langle \nabla_{x_i} L_i(x_t,\nu_{i,t}), x_{i,t}-x_i^{*} \rangle \nonumber \\
    \leq &\mathbb{E} \left[\left\| x_{i,t} - x_i^{*}\right\|^2\right] +  \eta^2   \frac{B^2}{\alpha_i^2} \nonumber \\
    &- 2\eta \langle \mathbb{E} \left[\varepsilon_{i,t} \right]+\nabla_i C_i(x_t), x_{i,t}-x_i^{*}\rangle .
\end{align}
Since $x^{*}$ is a NE of the risk-averse game, we have that $ \langle  \nabla_i C_i(x^{*}), x_{i,t}-x_i^{*}\rangle \geq 0$ for all $x_i \in \mathcal{X}_{i}$. Summing the inequality \eqref{eq:FO:t2} over $i=1,\ldots,N$, we have
\begin{align}\label{eq:FO:t3}
    &\mathbb{E} \left[\left\| x_{t+1} - x^{*}\right\|^2 \right]\nonumber \\
    \leq & \mathbb{E} \left[ \left\| x_{t} - x^{*}\right\|^2\right] +  \eta^2  B^2 \sum_i \frac{1}{\alpha_i^2}   \nonumber \\
    & - 2\eta \sum_i \langle \mathbb{E}\left[ \varepsilon_{i,t} \right]+\nabla_i C_i(x_t), x_{i,t}-x_i^{*}\rangle \nonumber \\
    \leq  &\mathbb{E} \left\| x_{t} - x^{*}\right\|^2 +   \sum_i \frac{\eta^2  B^2}{\alpha_i^2}    - 2\eta\sum_i \langle  \mathbb{E}\left[\varepsilon_{i,t}\right] , x_{i,t}-x_i^{*}\rangle \nonumber \\
    & - 2\eta \sum_i \langle  \nabla_i C_i(x_t)- \nabla_i C_i(x^{*}), x_{i,t}-x_i^{*}\rangle \nonumber \\
     \leq  &  (1 - 2\eta m) \mathbb{E}  \left\| x_{t} - x^{*}\right\|^2 +  \eta^2  B^2 \sum_i \frac{1}{\alpha_i^2} \nonumber \\
     &+ 2\eta D \sum_i \mathbb{E}\left[\left\| \varepsilon_{i,t} \right\| \right].
\end{align}
Rearranging the terms in \eqref{eq:FO:t3} and summing up over $t=1,\ldots,T$, we have
\begin{align*}
    \frac{1}{T} \sum_{t=1}^T \mathbb{E} \left\| x_{t} - x^{*}\right\|^2 \leq  \frac{D^2}{2\eta m T} + \frac{\eta B^2 S_2(\alpha)}{2m}& \nonumber \\
     + \frac{D}{mT}\sum_{t=1}^T \sum_i \mathbb{E}\left[\left\| \varepsilon_{i,t} \right\|\right]&.
\end{align*}
Applying the results in Lemma \ref{lemma:FO:sum:epsilon}, it holds with probability at least $1-\gamma$ that
\begin{align}\label{eq:FO:t5}
    \frac{1}{T} \sum_{t=1}^T \mathbb{E} \left\| x_{t} - x^{*}\right\|^2 
    \leq  \frac{D^2}{2\eta m T} + \frac{\eta B^2 S_2(\alpha)}{2m} & \nonumber \\
     + \frac{\sqrt{2}D B L_{\psi} S_1(\alpha) }{ m  \underline{p}}  \sqrt{\ln \frac{2T}{\gamma}} T^{-\frac{1}{2}} &.
\end{align}
Substituting $\eta = \frac{D}{B T^{\frac{1}{2}}}$ into \eqref{eq:FO:t5} yields the desired result. The proof ends.

\end{appendix}





%



\bibliography{00_citation}
\bibliographystyle{unsrt}







\end{document}